\newcommand{\Mdef}[2]{\newcommand{#1}{\relax \ifmmode #2 \else $#2$\fi}}
\newcommand{\sm }{\wedge}
\newcommand{\tensor}{\otimes}
\newcommand{\map}{\mathrm{map}}
\newcommand{\Hom}{\mathrm{Hom}}
\newcommand{\Ext}{\mathrm{Ext}}
\Mdef{\bhom}{\mathbf{\hat{H}om}}
\Mdef{\Mod}{\mathrm{mod}}
\newtheorem{thm}{Theorem}[section]
\newtheorem{lemma}[thm]{Lemma}
\newtheorem{cor}[thm]{Corollary}
\theoremstyle{definition}
\newtheorem{defn}[thm]{Definition}
\newtheorem{notation}[thm]{Notation}
\newtheorem{example}[thm]{Example}
\newtheorem{remark}[thm]{Remark}
\newcommand{\qqed}{\qed \\[1ex]}
\renewenvironment{proof}[1][\hspace*{-.8ex}]{\noindent {\bf Proof #1:\;}}{\qqed}
\Mdef{\PH} {\Phi^H}
\Mdef{\PK} {\Phi^K}
\Mdef{\PL} {\Phi^L}
\Mdef{\PT} {\Phi^{\T}}
\Mdef{\ef}{E{\cF}_+}
\Mdef{\etf}{\widetilde{E}{\cF}}
\Mdef{\eg}{E{G}_+}
\Mdef{\etg}{\tilde{E}{G}}
\Mdef{\infl}{\mathrm{inf}}
\Mdef{\defl}{\mathrm{def}}
\Mdef{\res}{\mathrm{res}}
\Mdef{\ind}{\mathrm{ind}}
\Mdef{\coind}{\mathrm{coind}}
\Mdef{\univ}{\mathcal{U}}
\Mdef{\Fp}{\mathbb{F}_p}
\Mdef{\Zpinfty}{\Z /p^{\infty}}
\Mdef{\Zpadic}{\Z_p^{\wedge}}
\newcommand{\dichotomy}[2]{\left\{ \begin{array}{ll}#1\\#2 \end{array}\right.}
\newcommand{\lra}{\longrightarrow}
\newcommand{\lr}[1]{\langle #1 \rangle}
\Mdef{\we}{\mathbf{we}}
\Mdef{\fib}{\mathbf{fib}}
\Mdef{\cof}{\mathbf{cof}}
\Mdef{\BI}{\mathcal{BI}}
\Mdef{\B}{\mathbb{B}}
\Mdef{\C}{\mathbb{C}}
\Mdef{\D}{\mathbb{D}}
\Mdef{\E}{\mathbb{E}}
\Mdef{\T}{\mathbb{T}}
\Mdef{\F}{\mathbb{F}}
\Mdef{\G}{\mathbb{G}}
\Mdef{\I}{\mathbb{I}}
\Mdef{\N}{\mathbb{N}}
\Mdef{\Q}{\mathbb{Q}}
\Mdef{\R}{\mathbb{R}}
\Mdef{\bbS}{\mathbb{S}}
\newcommand{\Z}{\mathbb{Z}}
\Mdef{\bA}{\mathbb{A}}
\Mdef{\bB}{\mathbb{B}}
\Mdef{\bC}{\mathbb{C}}
\Mdef{\bD}{\mathbb{D}}
\Mdef{\bE}{\mathbb{E}}
\Mdef{\bF}{\mathbb{F}}
\Mdef{\bG}{\mathbb{G}}
\Mdef{\bH}{\mathbb{H}}
\Mdef{\bI}{\mathbb{I}}
\Mdef{\bJ}{\mathbb{J}}
\Mdef{\bK}{\mathbb{K}}
\Mdef{\bL}{\mathbb{L}}
\Mdef{\bM}{\mathbb{M}}
\Mdef{\bN}{\mathbb{N}}
\Mdef{\bO}{\mathbb{O}}
\Mdef{\bP}{\mathbb{P}}
\Mdef{\bQ}{\mathbb{Q}}
\Mdef{\bR}{\mathbb{R}}
\Mdef{\bS}{\mathbb{S}}
\Mdef{\bT}{\mathbb{T}}
\Mdef{\bU}{\mathbb{U}}
\Mdef{\bV}{\mathbb{V}}
\Mdef{\bW}{\mathbb{W}}
\Mdef{\bX}{\mathbb{X}}
\Mdef{\bY}{\mathbb{Y}}
\Mdef{\bZ}{\mathbb{Z}}
\Mdef{\cA}{\mathcal{A}}
\Mdef{\cB}{\mathcal{B}}
\Mdef{\cC}{\mathcal{C}}
\Mdef{\mcD}{\mathcal{D}} % Something funny about \cD.
\Mdef{\cE}{\mathcal{E}}
\Mdef{\cF}{\mathcal{F}}
\Mdef{\cG}{\mathcal{G}}
\Mdef{\mcH}{\mathcal{H}} % There's something funny about \cH: it 
\Mdef{\cI}{\mathcal{I}}
\Mdef{\cJ}{\mathcal{J}}
\Mdef{\cK}{\mathcal{K}}
\Mdef{\mcL}{\mathcal{L}}% There's something funny about \cL: it 
\Mdef{\cM}{\mathcal{M}}
\Mdef{\cN}{\mathcal{N}}
\Mdef{\cO}{\mathcal{O}}
\Mdef{\cP}{\mathcal{P}}
\Mdef{\cQ}{\mathcal{Q}}
\Mdef{\mcR}{\mathcal{R}}% There's something funny about \cR: it 
\Mdef{\cS}{\mathcal{S}}
\Mdef{\cT}{\mathcal{T}}
\Mdef{\cU}{\mathcal{U}}
\Mdef{\cV}{\mathcal{V}}
\Mdef{\cW}{\mathcal{W}}
\Mdef{\cX}{\mathcal{X}}
\Mdef{\cY}{\mathcal{Y}}
\Mdef{\cZ}{\mathcal{Z}}
\Mdef{\ca}{\mathcal{a}}
\Mdef{\ct}{\mathcal{t}}
\Mdef{\At}{\tilde{A}}
\Mdef{\Bt}{\tilde{B}}
\Mdef{\Ct}{\tilde{C}}
\Mdef{\Et}{\tilde{E}}
\Mdef{\Ht}{\tilde{H}}
\Mdef{\Kt}{\tilde{K}}
\Mdef{\Lt}{\tilde{L}}
\Mdef{\Mt}{\tilde{M}}
\Mdef{\Nt}{\tilde{N}}
\Mdef{\Pt}{\tilde{P}}
\Mdef{\tA}{\tilde{A}}
\Mdef{\tB}{\tilde{B}}
\Mdef{\tC}{\tilde{C}}
\Mdef{\tE}{\tilde{E}}
\Mdef{\tH}{\tilde{H}}
\Mdef{\tK}{\tilde{K}}
\Mdef{\tL}{\tilde{L}}
\Mdef{\tM}{\tilde{M}}
\Mdef{\tN}{\tilde{N}}
\Mdef{\tP}{\tilde{P}}
\Mdef{\ft}{\tilde{f}}
\Mdef{\xt}{\tilde{x}}
\Mdef{\yt}{\tilde{y}}
\Mdef{\Ab}{\overline{A}}
\Mdef{\Bb}{\overline{B}}
\Mdef{\Cb}{\overline{C}}
\Mdef{\Db}{\overline{D}}
\Mdef{\Eb}{\overline{E}}
\Mdef{\Fb}{\overline{F}}
\Mdef{\Gb}{\overline{G}}
\Mdef{\Hb}{\overline{H}}
\Mdef{\Ib}{\overline{I}}
\Mdef{\Jb}{\overline{J}}
\Mdef{\Kb}{\overline{K}}
\Mdef{\Lb}{\overline{L}}
\Mdef{\Mb}{\overline{M}}
\Mdef{\Nb}{\overline{N}}
\Mdef{\Ob}{\overline{O}}
\Mdef{\Pb}{\overline{P}}
\Mdef{\Qb}{\overline{Q}}
\Mdef{\Rb}{\overline{R}}
\Mdef{\Sb}{\overline{S}}
\Mdef{\Tb}{\overline{T}}
\Mdef{\Ub}{\overline{U}}
\Mdef{\Vb}{\overline{V}}
\Mdef{\Wb}{\overline{W}}
\Mdef{\Xb}{\overline{X}}
\Mdef{\Yb}{\overline{Y}}
\Mdef{\Zb}{\overline{Z}}
\Mdef{\db}{\overline{d}}
\Mdef{\hb}{\overline{h}}
\Mdef{\qb}{\overline{q}}
\Mdef{\rb}{\overline{r}}
\Mdef{\tb}{\overline{t}}
\Mdef{\ub}{\overline{u}}
\Mdef{\vb}{\overline{v}}
\Mdef{\hc}{\hat{c}}
\Mdef{\he}{\hat{e}}
\Mdef{\hf}{\hat{f}}
\Mdef{\hA}{\hat{A}}
\Mdef{\hH}{\hat{H}}
\Mdef{\hJ}{\hat{J}}
\Mdef{\hM}{\hat{M}}
\Mdef{\hP}{\hat{P}}
\Mdef{\hQ}{\hat{Q}}
\Mdef{\thetab}{\overline{\theta}}
\Mdef{\phib}{\overline{\phi}}
\Mdef{\uA}{\underline{A}}
\Mdef{\uB}{\underline{B}}
\Mdef{\uC}{\underline{C}}
\Mdef{\uD}{\underline{D}}
\Mdef{\bolda}{\mathbf{a}}
\Mdef{\boldb}{\mathbf{b}}
\Mdef{\bfD}{\mathbf{D}}
\Mdef{\fm}{\frak{m}}
\Mdef{\fp}{\frak{p}}
\Mdef{\eps}{\epsilon}
\newcommand{\cell}{\mathrm{Cell}}
\newcommand{\BPn}{BP\langle n \rangle}
\newcommand{\ext}{\mathrm{Ext}}
\def\tz{tikzpicture}
\def\Vb#1{{\overline{V_{#1}}}}
\def\sb{{$\ssize\bullet$}}
\newcommand{\tmf}{\mathrm{tmf}}
\newcommand{\Ftwo}{\mathbb{F}_2}
\newtheorem{fig}[equation]{Figure}
\begin{document}
\title{Gorenstein duality and Universal coefficient theorems}
\author{Donald M. Davis}
\address{Department of Mathematics, Lehigh University\\Bethlehem, PA 18015, USA}
\email{dmd1@lehigh.edu}

\date{May 23, 2022}

\keywords{Universal Coefficient Theorem, Gorenstein ring, duality,
  Brown-Peterson(co)homology}
\thanks {2020 {\it Mathematics Subject Classification}: 55U20, 55U30,
  55N20, 55P43, 18G15, 13H10.}
\author{J.P.C.Greenlees}
\address{Mathematics Institute, Zeeman Building, Coventry CV4, 7AL, UK}
\email{john.greenlees@warwick.ac.uk}

\date{}

\begin{abstract}
We describe a duality phenomenon for cohomology 
theories with the character of Gorenstein rings.
\end{abstract}

%\thanks{I am grateful }
\maketitle

\tableofcontents

\section{Introduction}
We describe a Universal Coefficient Theorem relating homology and cohomology of suitable 
torsion spaces when the coefficient ring $R_*$ has good homological
properties, and we go on to lift this to a highly structured
equivalence when the cohomology theory is sufficiently nice. 
For example, given a cohomology theory $R^*(\cdot)$ 
whose coefficient ring $R_*$ is Gorenstein of shift $a$ (see Section
\ref{sec:Gor}) with  $R_0=\Z_{(p)}$ the statement is as follows.
For spaces $X$ with $R_*(X)$ a torsion module (i.e., so that each
element is annihilated by some power of the maximal ideal), there is an isomorphism
$$R^*(X)\cong \Sigma^a (R_*(X))^\vee$$
of $R_*$-modules, where $M^\vee=\Hom(M,\Z/p^{\infty})$ is the Pontryagin dual. 
We will explain that this applies in particular when $R_*$ is a 
polynomial ring on finitely many generators, and how to find the shift 
$a$ in that case. In particular it applies to the well-known chromatic
Johnson-Wilson theory  $\BPn$,  to give the striking duality phenomenon proved by the first
author \cite{DavisBPn}, which was motivated by his work with W.S.Wilson regarding the 2-local $ku$-homology and $ku$-cohomology groups of the Eilenberg-MacLane space $K(\Z/2,2)$. 

The proof  is based on the homological behaviour of the
coefficient ring $\BPn_*$. When \cite{DavisBPn} was posted on the arXiv, the second author
recognized the statement as following from a form of Gorenstein duality
and wrote down the proof of a statement in a structured context. The two
proofs are based on the same piece of algebra, but in rather different
contexts so that their overlap in applicability is actually rather
small.

The authors decided that combining the two papers would be to the advantage
of both, by giving generality and perspective as well as specific
examples. The present paper describes the algebra behind both results, and then
develops it in the two contexts: the first gives conclusions in terms of
homology and cohomology groups and the second, when it applies,
enhances this to a conclusion in the derived category.

The first author is grateful to Andy Baker, Andrey Lazarev, Doug Ravenel, Chuck Weibel, and Steve Wilson for helpful suggestions. 

\section{Gorenstein rings}
\label{sec:Gor}

In this section we remind the reader of some well known facts about
a  graded commutative local Noetherian ring $A$ with residue field 
$k$ (see \cite{BrodmannSharp} for general background). We will soon apply them in our topological
context.

\begin{defn}
  \label{defn:Gora}
  We say that $A$   is {\em Gorenstein} if
   any one of the following three equivalent conditions hold
   \begin{enumerate}
    \item  $A$ is of finite injective
dimension as a module over itself
\item The Ext groups $\Ext_A^i(k,A)$ are non-zero for finitely many $i$ or
\item The Ext groups are 
  $$\Ext_A^i(k,A)=
  \dichotomy{\Sigma^{b}k& \mbox{ for } i=n}
  {0&\mbox{ otherwise}}$$
  for some $b$, where $n$ is the Krull dimension of $A$.
\end{enumerate}
The condition that we will make direct use of is  the third.

If  $A$ is Gorenstein then $a=b-n$ is called the {\em Gorenstein shift}
of $A$. 
\end{defn}

\begin{remark}
(i)  For some purposes the bigrading of $\Ext^*_A(k, A)$ is significant,
  but for us only the total degree will be relevant. 

 (ii) We include the case of an ungraded ring as a graded ring entirely in
  degree 0. In this case,  each Ext group is ungraded and the condition
  in 
  Part 3 necessarily has $b=0$.
  \end{remark}

\begin{example}
  \label{eg:Gor}
(i) If $A=k[x_1, \ldots , x_n]$ with all the $x_i$ of positive degree then $A$ is Gorenstein with 
$b=-\sum_i|x_i|$ and $a=b-n$. 

(ii) If $A=K [x_1, \ldots , x_n]$ for an ungraded Gorenstein local 
ring $K$ of shift $c$ with all the $x_i$ of positive degree  then $A$ is Gorenstein of shift $b+c-n$ where $b=-\sum_i|x_i|$. 
\end{example}

In this context, the Gorenstein condition automatically implies
a  duality statement. For this we make use of the injective hull $I(k)$ of
the residue field, and the Matlis dualization process for $A$-modules $M$ defined by 
$$M^{\vee}=\Hom_A(M, I(k)). $$

\begin{example}
  (i) If $A=k[x_1, \ldots , x_n]$ as above then $I(k)=\Hom_k(A,k)$ and 
  the Matlis dual has a simple description 
  $$M^\vee=\Hom_A(M, I(k))=\Hom_k(M, k)$$

  (ii) If $A=K[x_1, \ldots , x_n]$ as above then we can express the
  duality for $A$ in terms of that for $K$. Indeed, $I_A(k)=\Hom_K(A,I_K(k))$ and 
  the Matlis dual has a simple description 
  $$M^\vee=\Hom_A(M, I_A(k))=\Hom_K(M, I_K(k))$$
 \end{example}

With Matlis duality in hand, we can state the Gorenstein duality
enjoyed by torsion modules over a Gorenstein ring. 

\begin{lemma}
  \label{lem:Gor}
  If $A$ is a Gorenstein graded ring and $M$ is a torsion  $A$-module
  then
  $$\Ext^i_A(M, A)=\dichotomy
  {\Sigma^{b}M^\vee & \mbox{ if } i=n}
  {0  & \mbox{otherwise}}$$
\end{lemma}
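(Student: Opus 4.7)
The plan is to run a dévissage starting from $M=k$, where the conclusion is precisely the hypothesis. First note that $k^\vee = \Hom_A(k, I(k))$ equals the submodule of $I(k)$ killed by $\fm$, which is the essential socle $k$ (as $I(k)$ is the injective hull of $k$); hence $\Sigma^b k^\vee = \Sigma^b k$, matching Definition \ref{defn:Gora}(3) while the vanishing of $\Ext^i_A(k,A)$ for $i\ne n$ matches the vanishing clause. The remaining task is to propagate this from $k$ to arbitrary torsion modules.

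For finitely generated torsion $M$: since $M$ is annihilated by some $\fm^N$, it is finitely generated over the Artinian quotient $A/\fm^N$, so it admits a finite filtration with successive quotients of the form $\Sigma^{c_i}k$. I would argue by induction on the length. Given a short exact sequence $0\to M'\to M\to M''\to 0$ of torsion modules, and the conclusion for $M'$ and $M''$, the long exact sequence of $\Ext^*_A(-,A)$ collapses: in degrees $>n$ by finite injective dimension (Definition \ref{defn:Gora}(1)), and in degrees $<n$ by the inductive hypothesis, yielding
\[
0\to \Ext^n_A(M'',A)\to \Ext^n_A(M,A)\to \Ext^n_A(M',A)\to 0.
\]
This maps naturally to the short exact sequence $0\to (M'')^\vee\to M^\vee\to (M')^\vee\to 0$ (exact because $I(k)$ is injective) shifted by $\Sigma^b$, and the five lemma promotes the isomorphism from the outer terms to $M$.

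For general torsion $M$, write $M$ as the filtered colimit $\colim_\alpha M_\alpha$ of its finitely generated torsion submodules. Both sides of the desired isomorphism carry this colimit to an inverse limit: the Matlis dual automatically, since $\Hom_A(-,I(k))$ is continuous in the first slot; and $\Ext^n_A(-,A)$ because it is right exact contravariantly (as $\Ext^{n+1}_A(-,A)=0$), so applied to the canonical presentation
\[
\bigoplus_{\alpha\le\alpha'}M_\alpha \to \bigoplus_\alpha M_\alpha \to M \to 0
\]
it identifies $\Ext^n_A(M,A)$ with the equaliser computing $\lim_\alpha \Ext^n_A(M_\alpha,A)$. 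Combined with the finitely generated case, this yields the general statement. The principal obstacle is precisely this last step: without the finite injective dimension supplied by the Gorenstein hypothesis, $\Ext^n_A(-,A)$ would not be right exact and $\Ext$ would not interact predictably with the filtered colimit; the induction in the middle step and the socle identification $k^\vee=k$ in the base case are by comparison routine.
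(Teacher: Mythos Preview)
Your route is genuinely different from the paper's. The paper does not run a d\'evissage at all: it replaces $A$ by the stable Koszul complex $K_\infty^\bullet(\fm)$ (the map $K_\infty^\bullet(\fm)\to A$ is an $\mathrm{RHom}_A(M,-)$-equivalence for torsion $M$), then invokes the local cohomology computation $H^*_\fm(A)=H^n_\fm(A)=\Sigma^b I(k)$ for Gorenstein $A$, so that $\mathrm{RHom}_A(M,A)\simeq \mathrm{RHom}_A(M,\Sigma^b I(k)[-n])=\Sigma^b M^\vee[-n]$ in one stroke. This handles all torsion $M$ simultaneously and produces naturality for free.

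Your argument, by contrast, has two gaps that keep it from going through as written. First, the five-lemma step needs a natural transformation $\Ext^n_A(-,A)\to\Sigma^b(-)^\vee$ (or the reverse) fixed in advance; you assert one exists but never construct it. Without it you only know that $\Ext^n_A(M,A)$ and $\Sigma^bM^\vee$ sit in short exact sequences with isomorphic outer terms, which does not force them to be isomorphic. (A fix: the Gorenstein condition makes the top term of the minimal injective resolution of $A$ equal to $\Sigma^b I(k)$, giving a natural surjection $\Sigma^bM^\vee\twoheadrightarrow\Ext^n_A(M,A)$ on which your induction can bite.) Second, in the colimit step your justification ``right exact contravariantly (as $\Ext^{n+1}_A(-,A)=0$)'' does not give what you use. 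Vanishing of $\Ext^{n+1}$ makes $\Ext^n_A(-,A)$ right exact in the sense that it turns an injection $M'\hookrightarrow M$ into a surjection, but identifying $\Ext^n_A(M,A)$ with the equalizer of $\prod\Ext^n_A(M_\alpha,A)\rightrightarrows\prod\Ext^n_A(M_\alpha,A)$ also requires $\Ext^{n-1}_A(K,A)=0$ for the (torsion but not finitely generated) kernels $K$ appearing in the presentation, and your finitely-generated d\'evissage has not established that. This is repairable (e.g.\ via a $\lim^1$ argument using a finite injective resolution of $A$, where the relevant $\lim^1$ term is $\lim^1\Ext^{n-1}_A(M_\alpha,A)=0$ by the finitely-generated case), but the repair is not the reason you gave.
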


\begin{proof}
This is a standard consequence of the Gorenstein condition. To give a
proof, one may consider the stable Koszul complex
$$K_{\infty}^\bullet (\fm)=(A\lra A[\frac{1}{y_1}])\tensor_A
\ldots \tensor_A (A\lra A[\frac{1}{y_s}])$$
associated to the maximal ideal  $\fm=(y_1, \ldots, y_s). $
The map $K_{\infty}^{\bullet}(\fm)\lra A$ induces a weak equivalence
$\Hom_A(M, \cdot)$ in the derived category. Now we use
$H^*_{\fm} (A)=H^n_\fm (A) =\Sigma^b I(k)$ by 
\cite[13.3.4]{BrodmannSharp}. It follows that 
$$\Ext^*_A(M, A)\simeq \Ext^n_A(M,  \Sigma^b I(k)) $$
as required.
  \end{proof}

\section{Consequences for cohomology theories}
\label{sec:coeffs}
Suppose now that $R$ is a ring spectrum (i.e., an $A_{\infty}$-ring)
so that $R_*$ is a Gorenstein commutative local Noetherian  ring of shift $a$ in the
sense of Section \ref{sec:Gor} and $R_0=K$. We immediately
have a universal coefficient theorem for spectra $X$ with $R_*(X)$
torsion.

\begin{thm}
  \label{thm:coeffs}
  If $R$ is a ring spectrum with $R_*$ Gorenstein of shift $a$ then
  for any $X$ with $R_*(X)$ torsion,   there is an isomorphism
  $$R^*(X)\cong \Sigma^a R_*(X)^{\vee}. $$
\end{thm}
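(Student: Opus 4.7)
The plan is to set up the universal coefficient spectral sequence for the $A_\infty$-ring $R$ and then use Lemma~\ref{lem:Gor} to observe that it collapses onto a single line. Since $R$ is $A_\infty$, the smash product $R\wedge X$ is an $R$-module spectrum with homotopy $R_*(X)$, and $R^*(X)$ is the homotopy of the function $R$-module $F_R(R\wedge X,R)$. Resolving $R\wedge X$ by free $R$-modules so that $\pi_*$ realises a chosen projective resolution of $R_*(X)$ over $R_*$, and then applying $F_R(-,R)$, produces the standard spectral sequence
$$E_2^{s,t}=\Ext^{s,t}_{R_*}(R_*(X),R_*)\Longrightarrow R^{t-s}(X).$$

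The second step is to apply Lemma~\ref{lem:Gor} to the torsion $R_*$-module $R_*(X)$. It forces $\Ext^{s}_{R_*}(R_*(X),R_*)=0$ for $s\neq n$, while the $s=n$ row is identified with $\Sigma^{b}R_*(X)^{\vee}$, where $n$ is the Krull dimension of $R_*$ and $b=a+n$. Thus the entire $E_2$-page is concentrated on the single line $s=n$.

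With only one nonzero row there is no room for any differential $d_r$ ($r\geq 2$) and no extension problem in the filtration of $R^*(X)$, since each total degree receives a contribution from at most one bidegree; the spectral sequence therefore collapses. Reading off the surviving row gives an isomorphism $R^{t-n}(X)\cong (\Sigma^{b}R_*(X)^{\vee})^{t}$ in each internal degree $t$, and reindexing with $m=t-n$ folds the cohomological shift $n$ and the internal shift $b$ into a single suspension by $a=b-n$, yielding $R^{*}(X)\cong \Sigma^{a}R_*(X)^{\vee}$ as desired.

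The main obstacle is really only bookkeeping: one must build the UCT spectral sequence in the $A_\infty$ setting (standard once one works in the derived category of $R$-modules), check its convergence (immediate, since the $E_2$-page is supported on one row and thus in bounded $s$-degree), and reconcile grading conventions between Matlis duals, internal suspensions, and the $(s,t)$-grading of the spectral sequence. All the genuinely homological content of the theorem is already packaged in Lemma~\ref{lem:Gor}, so no further calculation is needed once the spectral sequence is in place.
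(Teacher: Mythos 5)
Your proposal is correct and takes essentially the same route as the paper: set up the Universal Coefficient Spectral Sequence for $A_\infty$-ring spectra (the paper simply cites \cite{Rob} and \cite{EKMM} for its existence rather than sketching the construction), invoke Lemma~\ref{lem:Gor} to see the $E_2$ page is concentrated on the single line $s=n$, and conclude the collapse and read off the shift. The paper is terser on the collapse and extension-problem step, but the argument is identical in substance.
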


\begin{proof}  %[Proof of Theorem \ref{thm:coeffs}]
  By \cite[Corollary, p.257]{Rob} or \cite[IV.4.1]{EKMM},  if $R$ is an $A_\infty$ ring spectrum, there is a Universal Coefficient Spectral Sequence 
  $$\ext_{R_*}^{s,t}(R_*X,R_*)\Rightarrow R^{s+t}X.$$

 From Lemma \ref{lem:Gor},  the spectral sequence must
collapse, as it is confined to the single column $s=n$, and the
Lemma \ref{lem:Gor} gives the values. 
\end{proof}

\begin{example}
The  $p$-local Johnson-Wilson spectrum \cite{JW} $R=\BPn$ has
coefficient ring 
$R_*=\pi_*(R)=\Z_{(p)}[v_1,\ldots,v_n]$, with $|v_i|=2(p^i-1)$.

According to Example \ref{eg:Gor} (ii) this is a Gorenstein local
ring. Here $b=-D$ where $D=\sum_i |v_i|=2((p^{n+1}-1)/(p-1)-(n+1))$ is
the sum of the degrees of the generators and $c=-1$ is the Gorenstein
shift of $K= \Z_{(p)}\lra \Fp$, so that the Gorenstein shift of
$R_*\lra \Fp$ is  $a=b+c-n=-D-n-1$, and Matlis duality is
$(\cdot)^{\vee}=\Hom_{\Z_{(p)}} (\cdot , \Z/p^{\infty})$.
We may apply the general result since, by \cite[Corollary 3.5]{BJ}, 
$\BPn$ can be realised as an $A_{\infty}$-ring spectrum, so  we may
apply
Theorem \ref{thm:coeffs}.

Thus, if $R=\BPn$ and $R_*(X)$ is torsion, 
 there is an isomorphism of right $R_*$-modules 
$$R^*(X)\cong (R_*(\Sigma^{D+n+1}X))^\vee.$$

  \end{example}

  It may be worth making explicit the two simplest cases
  \begin{example}
 If $n=0$ then $BP\langle 
0\rangle=H\Z_{(p)}$ and $D=n=0$,  we have 
$$H^*(X; \Z_{(p)})=\Sigma^{-1}\Hom(H_*(X; \Z_{(p)}), 
\Z/p^\infty),$$
whenever $H_*(X; \Z_{(p)})$ is $p$-torsion. 
\end{example}

  \begin{example}
 If $n=1, p=2$ then $BP\langle 
1\rangle=ku$ is 2-local connective $K$-theory and $D=2, n=1$. We  thus
have 
$$ku^*(X)=\Sigma^{-4}\Hom(ku_*(X), 
\Z/2^\infty),$$
whenever $ku_*(X)$ is $(2, v_1)$-torsion. 
\end{example}

We illustrate this example with $X=K(\Z/2,2)$ in Section \ref{explsec}
to see that, even in this case, the duality statement is of considerable interest. 

\begin{remark}
  We have focused on connective theories, but this is not
  necessary. For instance, we may take Examples  \ref{eg:Gor} and adjoin a Laurent
  variable $u$ of positive degree $t$ to make the ring periodic and replace the
  field $k$ by the graded field $k[u,u^{-1}]$. 

  Localizing the previous argument we see  $A=k[x_1, \ldots , x_n]
  [u,u^{-1}]$  remains Gorenstein of shift 
$b=-\sum_i|x_i|$ and $a=b-n$, but now the shift is only defined 
mod $t$.  Similarly  $A=K [x_1, \ldots , x_n][u,u^{-1}] $
  is Gorenstein of shift $b+c-n$ where $b=-\sum_i|x_i|$, and again
  the shift is only defined mod $t$. 

This is relevant to certain well-known chromatic homotopy theories. 
The Johnson-Wilson theory $E(n+1)$ has
coefficient ring $E(n+1)_*=\Z_{(p)}[v_1, v_2, \ldots , v_{n}][v_{n+1},
v_{n+1}^{-1}]$. This is Gorenstein with the same shift as $BP\lr{n}_*=\Z_{(p)}[v_1, v_2, \ldots , v_{n}]$, but now only defined modulo $2(p^{n+1}-1)$.

Another example of this kind is the Lubin-Tate theory $E_n$ with coefficients
$(E_{n+1})_*=W(\mathbb{F}_{p^n})[[u_1, \ldots , u_n]][u, u^{-1}]$, with
$u_i$ of degree 0 and $u$ of degree 2 is Gorenstein of shift $-n-1$.
 \end{remark}

\section{An example for connective $K$-theory, with $\protect X=K(\Z /2,2)$.}\label{explsec}
In \cite{W} and \cite{DW},  Wilson and the first author
gave  partial calculations of $ku_*(K_2)$, where $K_2=K(\Z/2,2)$,
in their studies of Stiefel-Whitney classes. In \cite{DW2}, these
authors made a complete calculation of $ku^*(K_2)$. Using Theorem
\ref{thm:coeffs}, we can now give a complete determination of
$ku_*(K_2)$:
it is torsion since it contains no infinite groups or infinite
$v_1$-towers \cite{DW}. 

The work in \cite{DW} and \cite{DW2} was done using the Adams spectral sequence. It is interesting to compare the forms of the two Adams spectral sequence $E_\infty$ calculations. What appears as an $h_0$ multiplication in one usually appears as an exotic extension (multiplication by 2 not seen in Ext) in the other. We illustrate here with corresponding small portions of each. The portion of $ku^*(K_2)$ in Figure \ref{fig1} is called $A_3$ in \cite{DW2}. Note that in our $ku^*$ chart, indices increase from right to left. Exotic extensions  appear in red. One should think of the dual of the $ku_*$ chart as an upside-down version of the chart. The dual of the element in position $(30,7)$ in Figure \ref{fig2} is in position $(34,0)$ in Figure \ref{fig1}.

\bigskip 
\begin{minipage}{6in}
\begin{fig}\label{fig1}

{\bf A portion of $ku^*(K_2)$}

\begin{center}
\begin{\tz}[scale=.55]
\draw (0,0) -- (2,1) -- (2,0) -- (16,7); 
\draw (-1,0) -- (17,0); 
\draw (10,0) -- (16,3); 
\draw (14,0) -- (16,1) -- (16,0); 
\draw [color=red] (10,0) -- (10,4); 
\draw [color=red] (14,0) -- (14,2); 
\draw [color=red] (16,1) -- (16,3); 
\draw [color=red] (12,1) -- (12,5); 
\draw [color=red] (14,2) -- (14,6); 
\draw [color=red] (16,3) -- (16,7); 
\node at (0,-.8) {$36$}; 
\node at (4,-.8) {$32$}; 
\node at (8,-.8) {$28$}; 
\node at (12,-.8) {$24$}; 
\node at (16,-.8) {$20$}; 
\node at (0,0) {\sb}; 
\node at (2,0) {\sb}; 
\node at (2,1) {\sb}; 
\node at (4,1) {\sb}; 
\node at (6,2) {\sb}; 
\node at (8,3) {\sb}; 
\node at (10,4) {\sb}; 
\node at (12,5) {\sb}; 
\node at (14,6) {\sb}; 
\node at (16,7) {\sb}; 
\node at (10,0) {\sb}; 
\node at (12,1) {\sb}; 
\node at (14,2) {\sb}; 
\node at (16,3) {\sb}; 
\node at (14,0) {\sb}; 
\node at (16,0) {\sb}; 
\node at (16,1) {\sb}; 
\end{\tz}
\end{center}
\end{fig}
\end{minipage}

\bigskip 

\bigskip 
\begin{minipage}{6in}
\begin{fig}\label{fig2}

{\bf Corresponding portion of $ku_*(K_2)$}

\begin{center}

\begin{\tz}[scale=.55]
\draw (0,3) -- (0,0) -- (14,7); 
\draw (0,2) -- (2,3) -- (2,1); 
\draw (0,1) -- (6,4) -- (6,3); 
\draw (4,2) -- (4,3); 
\draw (14,0) -- (16,1); 
\draw [color=red] (14,0) -- (14,7); 
\node at (0,-.8) {$16$}; 
\node at (4,-.8) {$20$}; 
\node at (8,-.8) {$24$}; 
\node at (12,-.8) {$28$}; 
\node at (16,-.8) {$32$}; 
\draw (-1,0) -- (17,0); 
\node at (0,0) {\sb}; 
\node at (0,1) {\sb}; 
\node at (0,2) {\sb}; 
\node at (0,3) {\sb}; 
\node at (2,1) {\sb}; 
\node at (2,2) {\sb}; 
\node at (2,3) {\sb}; 
\node at (4,2) {\sb}; 
\node at (4,3) {\sb}; 
\node at (6,3) {\sb}; 
\node at (6,4) {\sb}; 
\node at (8,4) {\sb}; 
\node at (10,5) {\sb}; 
\node at (12,6) {\sb}; 
\node at (14,7) {\sb}; 
\node at (14,0) {\sb}; 
\node at (16,1) {\sb}; 
\end{\tz}
\end{center}
\end{fig}
\end{minipage}
\def\line{\rule{.6in}{.6pt}}

\section{Gorenstein ring spectra and Gorenstein
  duality}  

We now suppose that the representing spectrum $R$ of our cohomology
theory admits the structure of a commutative ring ($E_{\infty}$-ring). In this case we now
describe how the duality statement may be lifted to one at the level
of $R$-modules. When this applies, the duality statement in Theorem
\ref{thm:coeffs} can be deduced by passage to homotopy. There
are many examples where $R$ is a ring with Gorenstein coefficients
$R_*$ but where $R$ is not known to be realized as a commutative ring
($R=\BPn$ with $n\geq 3$ for instance), so that
the  approach of Section \ref{sec:coeffs} gives the best available results. On the other
hand there are many examples where $R$ is 
Gorenstein in a suitable derived sense without the coefficient ring
$R_*$ being Gorenstein,
and in this case we obtain results without counterparts in the setting
of Section \ref{sec:coeffs}. The results in this section build on
\cite{AdualGdual} in the context of \cite{hogor}.

\subsection{The Gorenstein condition}
We suppose given a commutative ring spectrum $R$ and a map $R\lra
\ell$. In our examples $\ell$ will either be the local ring $K=R_0$ or
its residue field $k$. Translating the third condition of Definition \ref{defn:Gora} into
the derived category, we obtain the definition of a Gorenstein ring spectrum.

\begin{defn} \label{defn:Gor}
  \cite[8.1]{hogor}
  We say that $R\lra \ell$ is {\em Gorenstein of shift $a$} if there is an equivalence
  $$\Hom_R(\ell,R)\simeq \Sigma^a\ell$$
of left $R$-modules. 
\end{defn}

\begin{remark}
The paper \cite{hogor} also requires a finiteness condition before $R\lra
\ell$ can be called Gorenstein. We will impose a slightly stronger
finiteness condition later.
\end{remark}

\begin{notation}
  %(i)
  In all the examples we consider here, $\ell$ will be an Eilenberg-MacLane
spectrum. We  will  use the same letter $\ell$ to denote both the
classical ring
and the Eilenberg-MacLane spectrum,   relying on context to determine
which is intended at any point.

%(ii) We will assume everything is $p$-local (spectra, rings, etc) to
%simplify notation.
%(ii)  In fact the examples will either have $k=\Z$ (the $p$-local integers)
 %or $k=\Fp$ (the field with $p$-elements), so we will be specific. 
\end{notation}

\begin{example}
We note that if $R_*$ is itself Gorenstein (as in Examples
\ref{eg:Gor}) it follows that $R\lra k$ is Gorenstein with the same
shift. In particular, this applies to the examples of polynomial rings
over $k$ or over a Gorenstein local ring $K$.
  
% Suppose  $R=BP\langle n \rangle$ and let
 % $D=|v_1|+|v_2|+\cdots +|v_n|$.  We will write the rest of the note
 % as if $R$ is a commutative ring spectrum. This is known if $n\leq
  %2$,  but not in general. For $n\geq 3$ the idea is to work with
  %modules over the commutative ring spectrum $MU$, but the argument
  %for   uniqueness of action using degrees does not apply, and the
  %proof is incomplete in this case. 

%  (a) If $k=\Z$ then $R_*\lra \Z$ is Gorenstein of 
%shift $-D-n$ and hence $R\lra \Z$ is Gorenstein of shift $-D-n$.

 % (b) If $k=\Fp$ then  $R_*\lra \Fp$ is Gorenstein of 
%shift $-D-n-1$, and hence $R\lra \Fp$ is Gorenstein of shift $-D-n-1$.
  \end{example}

  \subsection{Anderson duality and Brown-Comenetz duality}
Suppose  $R$ is connective and $K=R_0$. For any injective $K$-module
$I$ and any $R$-module $Y$ we may define the Brown-Comenetz dual spectrum $I^Y$ to be the
$R$-module defined by the formula
  $$[M, I^Y]^R_*=\Hom_K(\pi_*(M\tensor_R Y), I), $$
for $R$-modules $M$; this defines $I^Y$ as the representing object
since the  right hand side is a cohomology theory ($I$ is injective
over $K$). Note the special case $M=R\sm X$,  when 
  $$[X, I^Y]=\Hom_K(\pi_*(X\sm Y), I), $$

Now if   $K$ is of injective dimension $ 1 $ over itself, then
  we can choose an injective resolution of $K$-modules
  $$0\lra K \lra I\lra J\lra 0.  $$
  (for example if $K=\Z_{(p)}$ then we may take $I=\Q$ and $J=\Q/\Z_{(p)}=\Z/p^\infty$). 
 For an $R$-module $Y$, there are Brown-Comenetz duals $I^Y$ and $J^Y$ with respect to $I$ and $J$, and we define the Anderson dual with respect to $K$ via the cofibre sequence
 $$K^Y\lra I^Y\lra J^Y.$$
 Up to equivalence $K^Y$ is independent of the resolution. One can
 immediately find a short exact sequence for maps into the Anderson dual.

  \begin{cor}
    There is a short exact sequence
$$  0\lra \Ext_K(\Sigma Y^R_*(M),K)\lra  [M,K^Y]\lra \Hom_K(Y^R_*(M),K)\lra 0$$
In particular, we have an isomorphism
$$[K, K^R]_*\cong \Hom_K(K,K)=K $$
\end{cor}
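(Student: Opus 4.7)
The plan is to apply the functor $[M,-]^R_*$ to the defining cofibre sequence $K^Y \lra I^Y \lra J^Y$. This produces a long exact sequence of graded groups, and by the defining property of the Brown--Comenetz duals (available because $I$ and $J$ are injective $K$-modules) the two middle families of terms are identified with
$$[M,I^Y]^R_* \cong \Hom_K(Y^R_*(M), I), \qquad [M,J^Y]^R_* \cong \Hom_K(Y^R_*(M), J),$$
and the connecting map between them is precisely the one induced by $I\lra J$.

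Since $0\lra K\lra I\lra J\lra 0$ is an injective resolution of $K$ over itself, the kernel of $\Hom_K(Y^R_*(M),I)\lra \Hom_K(Y^R_*(M),J)$ computes $\Hom_K(Y^R_*(M),K)$ and the cokernel computes $\Ext^1_K(Y^R_*(M),K)$. Splicing these into the long exact sequence at each degree yields short exact sequences of the shape claimed, with the $\Sigma$ on the $\Ext$ term recording the one-degree shift contributed by the boundary map.

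For the ``in particular'' statement I would specialize to $M=K$ and $Y=R$. Then $M\tensor_R Y = K\tensor_R R = K$, so $Y^R_*(M) = \pi_*(K)$ is the module $K$ concentrated in degree $0$. Because $K$ is free over itself, $\Ext^1_K(K,K)=0$, so the $\Ext$ term in the short exact sequence vanishes and one is left with the asserted isomorphism $[K,K^R]_* \cong \Hom_K(K,K) = K$.

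The argument is essentially formal once the Brown--Comenetz universal property is in hand, so there is no serious obstacle. The only step requiring a little care is aligning the graded suspension conventions, to confirm that the boundary map in the long exact sequence contributes exactly the $\Sigma$ appearing in the statement rather than some other shift.
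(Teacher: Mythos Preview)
Your proposal is correct and is precisely the argument the paper has in mind: the paper does not spell out a proof but simply remarks, immediately before the corollary, that ``one can immediately find a short exact sequence for maps into the Anderson dual'' from the defining cofibre sequence $K^Y\lra I^Y\lra J^Y$. Your long exact sequence argument, identification of the Brown--Comenetz terms, and specialization $M=K$, $Y=R$ are exactly this.
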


\subsection{Two finiteness conditions}
There are numerous cases of interest where, $R\lra \ell$ satisfies the strong condition that $\ell$ is 
small over $R$ in the sense that $\ell$ is finitely built from 
$R$. If $R$ is a conventional commutative local Noetherian
ring this is equivalent to  $R$ being regular.

The relevant results continue to hold under  a much weaker condition. We require
$\ell$ to be {\em   proxy small} \cite[4.6]{hogor} in the sense that there
is a small object
$\widehat{\ell}$ so 
that $\ell$ finitely builds $\widehat{\ell }$ and $\widehat{\ell}$ builds $\ell$. If $R$ is
a conventional commutative local Noetherian ring this always holds,
and we may take $\widehat{\ell}$ to be the Koszul complex associated to any
finite generating set for the maximal ideal.

\subsection{Cellularization}
\label{subsec:cell}
Let $\cE=\Hom_R(\ell ,\ell)$ and note that for any $R$-module $M$,
$\Hom_R(\ell ,M)$ is a right $\cE$-module. We have an evaluation map
$$\epsilon: \Hom_R(\ell, M)\tensor_{\cE}\ell \lra M.  $$
We note that $\Hom_R(\ell,M)$ is built from $\cE $ and hence
$\Hom_R(\ell ,M)\tensor_{\cE}\ell$ is built from $\ell$. The evaluation map is
thus a map from a $\ell$-cellular object to $M$,  and 
it is a $\ell$-equivalence provided $\ell$ is proxy small (\cite[6.10
and  6.14]{hogor}). Thus 
$$\cell_\ell M:=\Hom_R(\ell,M)\tensor_{\cE}\ell $$
is $\ell$-cellularization. 

\begin{example}
Suppose $R$ is connective with $R_0=\Z_{(p)}$ and $R_n$ is a finitely
generated $\Z_{(p)}$-module in each degree.

(i) If $K=\Z_{(p)}$ and $R\lra K$ is an isomorphism in $\pi_0$,
then we see that $K^R$ is built from $K^{K} \simeq K$, 
and so $\cell_{K }K^R\simeq K^R$.

(ii) Now suppose $k=\Fp$ and that in $\pi_0$ the map  $R\lra k=\Fp$ is
projection $\Z_{(p)}\lra \Fp$ onto the residue field.
%We may the additional assumption that $R_n$ is a free $Z$-module  in each degree.

Since $\Q^R$ is rational, we find $\cell_{\Fp}\Q^R\simeq *$. This gives
$$\cell_{\Fp}\Z_{(p)}^R\simeq \Sigma^{-1} \cell_{\Fp}(\Z/p^\infty)^R\simeq
\Sigma^{-1}(\Z/p^{\infty})^R,$$
the desuspension of the  Brown-Comenetz dual of $R$.
\end{example}

\subsection{Algebraic criteria for cellularity}
There are a number of cases where we can give criteria for cellularity
by looking at coefficients. Suppose  $\fm=\ker (R_*\lra k)=(y_1,
\ldots, y_s) $ and write
$$\Gamma_{\fm}R:=\fib (R\lra R[\frac{1}{y_1}])\tensor_R
\cdots \tensor_R \fib (R\lra R[\frac{1}{y_s}]), $$
for the stable Koszul complex and $\Gamma_\fm M=\Gamma_{\fm}R\tensor_RM$. By construction there is a spectral
sequence
$$H^*_{\fm}(R_*; M_*)\Rightarrow \pi_*(\Gamma_{\fm}M),  $$
so that in particular if  $M=R$ and $R_*$ is Cohen-Macaulay,
this collapses to an isomorphism
$$H^n_{\fm}(R_*)=\Sigma^n \pi_*(\Gamma_{\fm}R). $$
We say that $R$ has {\em algebraic $k$-cellularization} if the stable
Koszul complex gives the cellularization: 
$$\Gamma_{\fm}R\simeq \cell_k R. $$
We note that in this case $k$ is proxy small with the unstable Koszul
complex $R/y_1\tensor_R \cdots \tensor_R R/y_s$ as witness. 

\begin{lemma}
If $R$ has algebraic $k$-cellularization then $M$ is $k$-cellular if
and only if $M_*$ is a torsion $R_*$-module. 
  \end{lemma}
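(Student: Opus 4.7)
The plan is to prove the two implications separately, combining the algebraic $k$-cellularization hypothesis $\Gamma_\fm R\simeq \cell_k R$ with a direct analysis of what the stable Koszul complex does to a general module $M$.

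For the forward direction, I would let $\cC$ be the class of $R$-modules $N$ for which $N_*$ is torsion over $R_*$. The module $k$ lies in $\cC$ because $\fm$ acts trivially on $k_*$. The class $\cC$ is closed under suspensions (trivially), arbitrary coproducts (a direct sum of torsion modules is torsion), cofibre sequences (by the long exact sequence of homotopy, since torsion modules form a Serre class), and retracts (submodules of torsion modules are torsion). Thus $\cC$ is a localizing subcategory containing $k$, so $\loc(k)\subseteq \cC$, which shows that every $k$-cellular $M$ has torsion homotopy.

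For the converse, suppose $M_*$ is torsion. Then each element of $M_*$ is annihilated by a power of every $y_i$, so $M_*[y_i^{-1}]=0$ and hence $M[y_i^{-1}]\simeq *$ for each $i$. Iterating the fibre-sequence definition of the stable Koszul complex then gives $\Gamma_\fm M\simeq M$. On the other hand, $\Gamma_\fm M=\Gamma_\fm R\otimes_R M\simeq (\cell_k R)\otimes_R M$ by hypothesis. The functor $-\otimes_R M$ preserves homotopy colimits and so carries $\loc_R(k)$ into $\loc_R(k\otimes_R M)$; and $k\otimes_R M$ is a module over the ring spectrum $k$, hence itself $k$-cellular as an $R$-module, so $\loc_R(k\otimes_R M)\subseteq \loc_R(k)$. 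Therefore $M\simeq \Gamma_\fm M$ is $k$-cellular.

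The main technical point to check carefully is in the reverse direction: the claim that $-\otimes_R M$ preserves $k$-cellularity, which rests on $k\otimes_R M$ being built from $k$ because it is a $k$-module spectrum. Everything else is routine manipulation of Koszul complexes and closure properties of localizing subcategories. The heart of the argument is the comparison $\Gamma_\fm M\simeq M$ for torsion $M_*$, which is what propagates the algebraic cellularization hypothesis from $R$ itself to an arbitrary $M$.
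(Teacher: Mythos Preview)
Your proof is correct, and the converse direction follows essentially the same route as the paper: both show that the natural map $\Gamma_\fm M \to M$ is an equivalence when $M_*$ is torsion (you argue directly that each $M[1/y_i]$ vanishes; the paper phrases it via $H^*_\fm(M_*)=H^0_\fm(M_*)=M_*$), and then identify $\Gamma_\fm M$ with a $k$-cellular object using the hypothesis $\Gamma_\fm R\simeq \cell_k R$. You are more explicit than the paper about why $(\cell_k R)\otimes_R M$ lies in $\loc(k)$, supplying the argument via $k\otimes_R M$ being a $k$-module; the paper simply writes $\cell_k M\simeq \Gamma_\fm R\otimes M$ and moves on.

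Your forward direction is genuinely different and cleaner: you note that modules with torsion homotopy form a localizing subcategory containing $k$, hence containing all of $\loc(k)$, and this uses nothing about the algebraic cellularization hypothesis. The paper instead invokes the hypothesis to write $M\simeq \cell_k M\simeq \Gamma_\fm R\otimes M$ and then reads off torsion from the (finite, torsion-$E_2$) local cohomology spectral sequence. Your argument buys simplicity and shows that this implication holds unconditionally; the paper's version keeps both halves in the same $\Gamma_\fm$ framework but is no shorter.
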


  \begin{proof}
    If $M$ is $k$-cellular then $M\simeq \cell_kM\simeq \Gamma_\fm R
    \tensor M$, and the spectral sequence for calculating $M_*$ is
    finite with a torsion $E_2$-term so $M_*$ is torsion.

    Conversely if $M_*$ is torsion then
    $H^*_\fm(M_*)=H^0_{\fm}(M_*)=M_*$ and the map $\Gamma_\fm M \lra
    M$ is an isomorphism in homotopy and hence a weak equivalence. 
  \end{proof}

  \begin{remark}
If $k$ is a finite field then an $R_*$-module $M_*$ is torsion if and
only if it is {\em locally finite} in the sense that the submodule
generated by an  element $x\in M_*$ is a finite set. 
    \end{remark}

\begin{example}
(i)  Suppose $R_0=\Z_{(p)}$ with $R_*=\Z_{(p)}[x_1, \ldots , x_n]$  (for
  example $R=BP\langle n \rangle$), so that $\fm=(p, x_1, \ldots ,
  x_n)$. In this case $R\lra k$  is regular and has algebraic $k$-cellularization.

  If is clear that $\Gamma_\fm R $ is the
  $\widehat{k}$-cellularization where
  $$\widehat{k}=R/p \tensor_R R/x_1\tensor_R \cdots \tensor_R R/x_n. $$
 In this case $\widehat{k}\simeq k$ so that $R$ has algebraic
 $k$-cellularization. 

%  We claim that if  every  element $x\in M_*$
 % generates a $R_*$-torsion submodule (or equivalently $R_*\cdot x$
  %is a finite set) then $M$ is $\Fp$-cellular. Indeed,
  %$R[1/x]\tensor_RM\simeq 0$ for any $x\in (p,v_1, \ldots , v_n)$, and hence
  %$K^{\infty}(\Fp)\tensor_RM\simeq M$ where
  %$$K^{\infty}(\Fp)=\fibre(R\lra R[1/p])\tensor_R \fibre(R\lra
  %R[1/v_1])\tensor_R   \fibre(R\lra R[1/v_n]). $$
  %Since $K^\infty (\Fp)$ is built from $\Fp$, $M$ is built from
  %$\Fp\tensor_RM$. Since $M$ is built from $R$, $\Fp\tensor_RM$ is built from
  %$\Fp=\Fp \tensor_RR$.

(ii)  It is shown in \cite[5.1]{AdualGdual} that this extends to the
case that $R\lra k$ is proxy regular and has algebraic cellularization
if $R_*$ is a hypersurface ring.

(iii)  It is proved in \cite[5.2]{AdualGdual} or \cite{tmfGor} that
the ring spectrum $\tmf$ of 2-local topological modular forms
has algebraic $k$-cellularization. 

(iv)  The proof of \cite[9.3]{hogor} shows that for any compact Lie
group $G$ the spectrum $R=C^*(BG) =\map (BG, k)$ of cochains on $BG$
has algebraic $k$-cellularization.
\end{example}

%\begin{remark}
%The condition that every element of $M_*$ generates an $R_*$
%submodule which is finite is sometimes known as $M_*$ being `locally finite'.  
 % \end{remark}

\subsection{Gorenstein duality}
\newcommand{\uk}{\underline{k}}
We continue to suppose that $R$ is a connective commutative ring
spectrum with $R_0=K$ a local ring with residue field $k$. 
We say that $R\lra k$ has {\em Gorenstein
duality of shift $a$} if we have an equivalence
$$\Gamma_k R\simeq \Sigma^a I(k)^R, $$
and we say $R\lra K$ has {\em Gorenstein duality of shift $a$} if 
$$\Gamma_K R\simeq \Sigma^aK^R. $$
In this section we recall that under favourable circumstances, if
$R\lra k$ is Gorenstein of shift $a$ then it has Gorenstein duality of
shift $a$, and similarly for $R\lra K$.

If $R\lra k$ is Gorenstein of shift $a$ in the sense of
\ref{defn:Gor}, we have
$$\Hom_R(k,R)\simeq \Sigma^a k\simeq \Hom_R(k, \Sigma^a I(k)^R). $$
Provided the composite $\Hom_R(k, R)\simeq \Hom_R(k, \Sigma^ak^R)$ is
an isomorphism of right $\cE$-modules (for example if $\cE$ has a
unique action on $k$),  then we may apply $\tensor_{\cE} k$, and
provided $k$ is proxy small,
Subsection \ref{subsec:cell} shows that we have an equivalence
$$\cell_kR\simeq \Sigma^a\cell_k I(k)^R $$
of left $R$-modules.

Similarly 
$$\cell_KR\simeq \Sigma^a\cell_K K^R .$$

\begin{example}
  If $R_0=K=\Z_{(p)}$, $k=\Fp$ and $R_*=K [x_1, \ldots, x_n]$ (for example 
  $R=BP\langle n \rangle$) then $R_*\lra K$
  is Gorenstein of shift $-D-n$, so is $R\lra K$. The ring
  $\cE$ is exterior on generators of 
degree $-|x_1|-1, \ldots , -|x_n|-1$. This has a unique action on 
$K$ by the argument of \cite[3.9]{hogor}. We deduce
$$\Gamma_K R\simeq \Sigma^{-D-n}\Gamma_{K}K^R\simeq 
\Sigma^{-D-n}K^R. $$

We now apply $\Gamma_{p}$ to both sides, noting
$\Gamma_p\Gamma_\Z=\Gamma_{\Fp}$ and see
$$\Gamma_{\Fp}R\simeq \Sigma^{-D-n}\Gamma_{p}K^R\simeq 
\Sigma^{-D-n-1}(\Z/p^{\infty})^R. $$
This is the statement that $R \lra \Fp$ has Gorenstein duality of
shift $-D-n-1$. 
  \end{example}

  \begin{remark}
It would be preferable to take $k=\Fp$ and argue directly. The map
$R\lra \Fp$ is  Gorenstein of shift $-D-n-1$ and we would like to
deduce Gorenstein duality of the same shift. However this would require a unique action of
$\Hom_R(\Fp, \Fp)$ on $\Fp$, and the degrees of generators do not make
this obvious. 
    \end{remark}

    \subsection{The Universal Coefficient Theorem}
    Finally we may deduce the duality statement for a commutative ring
    spectrum with Gorenstein duality.

\begin{thm}
    Suppose the $R\lra k$ has Gorenstein duality of shift $a$. It then
    follows that if $M$ is $k$-cellular we     have an equivalence
    $$\Hom_R(M, R)\simeq \Sigma^a I(k)^M. $$
 \end{thm}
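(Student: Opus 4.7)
The plan is to rewrite $\Hom_R(M,R)$ using $k$-cellularization, substitute the Gorenstein duality equivalence, and then identify the result via the universal property of the Brown-Comenetz dual.

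First I would exploit the hypothesis that $M$ is $k$-cellular. The cellular approximation map $\Gamma_kR\lra R$ is, by the universal property of cellularization, a $k$-equivalence, so it induces an equivalence
$$\Hom_R(M,\Gamma_kR)\lraequiv \Hom_R(M,R)$$
for every $k$-cellular $M$. Substituting the Gorenstein duality equivalence $\Gamma_kR\simeq\Sigma^a I(k)^R$ then gives
$$\Hom_R(M,R)\simeq \Hom_R(M,\Sigma^a I(k)^R)\simeq\Sigma^a\Hom_R(M,I(k)^R),$$
so the whole question is reduced to identifying $\Hom_R(M,I(k)^R)$ with $I(k)^M$.

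For that identification I would appeal to the defining universal property of the Brown-Comenetz dual recalled earlier in the paper. For any $R$-module $N$, a straightforward adjunction calculation gives
$$[N,\Hom_R(M,I(k)^R)]^R=[N\tensor_R M,I(k)^R]^R=\Hom_K\bigl(\pi_*(N\tensor_R M\tensor_R R),I(k)\bigr),$$
while by definition
$$[N,I(k)^M]^R=\Hom_K\bigl(\pi_*(N\tensor_R M),I(k)\bigr).$$
Since $N\tensor_R M\tensor_R R\simeq N\tensor_R M$, the two functors of $N$ agree, and Yoneda yields $\Hom_R(M,I(k)^R)\simeq I(k)^M$. Stringing these equivalences together produces the desired $\Hom_R(M,R)\simeq \Sigma^a I(k)^M$.

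The main point requiring care is the first step: one must know that $\Gamma_kR\lra R$ induces an equivalence on $\Hom_R(M,-)$ for $k$-cellular $M$. Under the proxy-smallness assumption on $k$ in force throughout the section, Subsection~\ref{subsec:cell} provides the formula $\Gamma_kR=\Hom_R(k,R)\tensor_\cE k$ and shows this is genuinely the $k$-cellular approximation, so the required universal property is available. Everything else is formal manipulation of adjunctions and the stated Gorenstein duality hypothesis.
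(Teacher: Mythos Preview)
Your argument is correct and follows exactly the same three-step route as the paper: replace $R$ by its $k$-cellular approximation using that $M$ is $k$-cellular, invoke the Gorenstein duality equivalence $\Gamma_kR\simeq\Sigma^aI(k)^R$, and then identify $\Hom_R(M,I(k)^R)$ with $I(k)^M$. The paper simply records the chain of equivalences $\Hom_R(M,R)\simeq\Hom_R(M,\cell_kR)\simeq\Hom_R(M,\Sigma^aI(k)^R)\simeq\Sigma^aI(k)^M$ without further comment, whereas you have spelled out the Yoneda justification for the last step and flagged the proxy-smallness needed for the first; but the underlying argument is the same.
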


  \begin{proof}
 Since $M$ is $k$-cellular we have 
    $$\Hom_R(M,R) \simeq \Hom_R(M, \cell_kR) \simeq \Hom_R(M, \Sigma^a
    I(k)^R) \simeq \Sigma^a I(k)^M$$
\end{proof}

The special case $M=R\sm X$ is of particular interest,
especially when cellularity can be detected in terms of
coefficients. 

\begin{cor}
 Taking $M=R\sm X$, we see that if $R_*X$ is a torsion module we have an isomorphism 
    $$R^*(X)=\Sigma^a\Hom_K(R_*X, I(k)). $$
    \end{cor}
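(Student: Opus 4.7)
The plan is to simply instantiate the preceding theorem at $M = R \wedge X$ and pass to homotopy; all the work has been done upstream, and what remains is to identify the two sides of the resulting equivalence in terms of coefficients.

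First I would verify the cellularity hypothesis of the theorem. Under the running assumption that $R \lra k$ has algebraic $k$-cellularization (the setting in which Gorenstein duality was established in this section), the lemma in the subsection on algebraic criteria for cellularity tells us that an $R$-module $M$ is $k$-cellular if and only if $M_*$ is a torsion $R_*$-module. Applied to $M = R \wedge X$, whose homotopy is $(R \wedge X)_* = R_*(X)$, the torsion assumption on $R_*(X)$ gives precisely that $R \wedge X$ is $k$-cellular, so the theorem applies and produces an equivalence of $R$-modules
$$\Hom_R(R \wedge X, R) \simeq \Sigma^a I(k)^{R \wedge X}.$$

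Next I would identify each side on homotopy groups. On the left, the usual adjunction gives $\pi_* \Hom_R(R \wedge X, R) \cong R^{-*}(X)$, i.e., $R^*(X)$ in our graded conventions. On the right, the defining property of the Brown-Comenetz dual (from the subsection on Anderson and Brown-Comenetz duality) reads
$$[N, I(k)^Y]^R_* = \Hom_K\bigl(\pi_*(N \otimes_R Y), I(k)\bigr)$$
for any $R$-modules $N, Y$. Specializing to $N = R$ and $Y = R \wedge X$ and using $R \otimes_R (R \wedge X) \simeq R \wedge X$, this computes
$$\pi_* I(k)^{R \wedge X} \;\cong\; \Hom_K(R_*X, I(k)).$$
Combining with the suspension $\Sigma^a$ and comparing with the left-hand side yields the claimed isomorphism
$$R^*(X) \;\cong\; \Sigma^a \Hom_K(R_*X, I(k)).$$

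There is no genuine obstacle here beyond careful bookkeeping; the only point that requires a moment of thought is the cellularity step, since the corollary is stated only with the torsion hypothesis, and one needs the algebraic $k$-cellularization of $R$ to convert "torsion coefficients" into "$k$-cellular module". In every example relevant to this section (polynomial rings, hypersurfaces, $\tmf$, cochains on $BG$) this hypothesis has already been verified, so the corollary applies in all cases of interest.
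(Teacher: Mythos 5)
Your proof is correct and follows essentially the same route as the paper's: where you invoke the preceding theorem at $M=R\sm X$ and then pass to homotopy, the paper inlines the theorem's proof directly (using the chain $[R\sm X,R]^R_*\simeq[R\sm X,\Gamma_{\Fp}R]^R_*\simeq[R\sm X,\Sigma^a(\Z/p^\infty)^R]^R_*$ and the free–forget adjunction), but the underlying steps --- cellularity from torsion coefficients, Gorenstein duality, and the defining property of the Brown--Comenetz dual --- are identical. Your explicit flag that algebraic $k$-cellularization is needed to convert the torsion hypothesis into $k$-cellularity is a correct and worthwhile observation that the paper leaves implicit in the sentence preceding the corollary.
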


    \begin{proof} We may calculate
\begin{multline*}
  R^*(X)=[X,R]_*=\\
  [R\sm X, R]^R_*\simeq [R\sm X, \Gamma_{\Fp} R]^R_*
\simeq [R\sm X, \Sigma^a(\Z/p^\infty)^R]^R_*
\simeq [ X, \Sigma^a(\Z/p^\infty)^R]_*\\
=\Sigma^a \Hom_\Z(R_*(X), \Z/p^\infty) 
\end{multline*}
\end{proof}

\begin{example}
Suppose $R_0=K=\Z_{(p)}$, $k=\Fp$ and $R_*=K [x_1, 
\ldots, x_n]$ (for example   $R=BP\langle n \rangle$) with $a=-D-n-1$
we recover Theorem \ref{thm:coeffs}. We note that it is only known that $\BPn$
admits the structure of a commutative ring for $n\leq 2$. 
\end{example}

\begin{example}
The spectrum  $R=\tmf$ of 2-local topological modular forms has
$R_0=K=\Z_{(2)}$ and 
$k=\Ftwo$. This has Gorenstein duality in the form
$$\Gamma_{\Ftwo}\tmf \simeq \Sigma^{-23}(\Z/2^{\infty})^\tmf.$$
A result of this type was first proved by Mahowald-Rezk \cite{MR}; a
proof of precisely the statement here, along with a discussion of
alternative approaches can be found in \cite[4.8]{tmfGor}. From this
we deduce 
$$\tmf^*(X) =\Sigma^{-23}\Hom_{\Z_{(2)}}(\tmf_*(X), \Z/2^{\infty})$$
whenever $\tmf_*(X) $ is $\tmf_*$-torsion. 
\end{example}

\begin{example}
If $G$ is a finite group, we may take $R=C^*(BG; k)$ with $R_0=k$. By \cite[10.3]{hogor} this has
Gorenstein duality with shift 0. In the present case the Matlis dual
is the vector space dual, so that Gorenstein duality takes the form
$$\Gamma_{k}C^*(BG) \simeq C_*(BG).$$
A plentiful supply of modules comes from $G$-spaces $X$, where we may
form  $M=C_*(EG\times_GX)$; this is obviously torsion since it is zero
in negative degrees. Inserting this into the theorem we see that for
any  $G$-space there is a spectral sequence
$$\Ext_{H^*(BG)}^{*,*}(H_*(EG\times_G X), H^*(BG))\Rightarrow
H^*_G(EG\times_G X), $$
which is in the form of a Universal Coefficient Theorem relating
homology and cohomology of the Borel construction. One could approach
this in an equivariant context, where the form would seem very
familiar. 
\end{example}

\subsection{Variations}
We continue to assume $R\lra k$ has Gorenstein duality of shift $a$,
and we could also have considered the implications of Gorenstein duality for
$R\lra K$ (which is of shift $a+1$). This would take the form
$$R^*(X)\sim \Sigma^{a+1} \Hom_K(R_*(X), K)$$
where the symbol $\sim$ indicates that on the right $\Hom_K$ and $\Ext_K$
will be involved when $\widehat{R}_*(X)$ is not projective over
$K$. The torsion requirement on $R_*(X)$ would now refer not to the
maximal ideal but to the ideal $J=\ker(R_*\lra K)$.

More explicitly, if $R_*(X)$ is $J$-power torsion, there is a short exact sequence
$$0\lra \Sigma^a \Ext_K(R_*(X), K)\lra R^*(X)\lra \Sigma^{a+1} \Hom_K (R_*(X), K)\lra 0 . $$

We have focused on connective theories, and hence obtained universal
coefficient theorems when $R_*(X)$ is torsion. It is explained in
\cite{AdualGdual} that when $R$ is Gorenstein we may nullify $K$ to
form a new theory $\widehat{R}$, which can be thought of as splicing
together $R$ and $\cell_K R$, and which will usually not be
connective. For example if $R=ku$, with $a=-4$ then
$\widehat{R}=KU$, if $R=\tmf$ with $a=-23$ then  $\widehat{R}=\mathrm{Tmf}$, and
if $R=C^*(BG)$ then $\widehat{R}$ is the fixed points of the  Tate spectrum of $G$.

In favourable cases when $R\lra K$ has Gorenstein duality
with shift $a+1$ then we obtain an Anderson self-duality statement for $\widehat{R}$:
$$\widehat{R}\simeq \Sigma^{a+2} K^{\widehat{R}}.$$
This then gives a Universal Coefficient Theorem 
$$\widehat{R}^*(X)\sim \Sigma^{a+2} \Hom_K (\widehat{R}_*(K), K). $$
The symbol $\sim$ has the same meaning as above, but 
there is now no torsion requirement on $\widehat{R}_*(X)$.

%\bibliographystyle{plain}
%\bibliography{jpcgbib}

\begin{thebibliography}{99}
\bibitem{BJ} A.Baker and A.Jeanneret, {\em Brave new Hopf algebroids
    and extensions of $MU$-algebras}, Homology, Homotopy, and
  Applications {\bf 4} (2002) 163--173.
  \bibitem{BrodmannSharp} M. Brodmann and R.Y.Sharp {\em Local
    cohomology},
   Cambridge Studies in Advanced Mathematics CUP (2013), xii+491
\bibitem{tmfGor} R.R.Bruner, J.P.C.Greenlees and J. Rognes 
{\em The local cohomology spectral sequence for topological modular forms}
Math. Z (to appear) Preprint  49pp, arXiv: 2107.02272 
%\bibitem{BrunsHerzog} W. Bruns and  Herzog
\bibitem{DavisBPn}
  D. M.Davis {\em Duality in $BP\langle n \rangle$ (co)homology} 
  Preprint (2022), 5pp, arXiv:2205.06137
\bibitem{DW} D.M.Davis and W.S.Wilson,
  {\em Stiefel-Whitney classes and immersions of orientable and Spin manifolds}, Topology and Appl {\bf 307} (2022) https://doi.org/10.1016/j.topol.2021.107780. 
\bibitem{DW2}
D.M.Davis and W.S.Wilson, {\em Connective $K$-theory of the
    Eilenberg-MacLane space $K(\Z/p,2)$}, in preparation,
  www.lehigh.edu/$\sim$dmd1/kkpaper6.pdf.
  \bibitem{hogor}
   W.G.Dwyer, J.P.C.Greenlees and S.B.Iyengar 
   {\em Duality in algebra and topology.}
   Advances in Maths. {\bf 200} (2006) 357-402, arXiv:math/0510247 
%\bibitem{Gr} J.P.C.Greenlees, {\em Gorenstein duality and universal
%  coefficient theorem%s}, in preparation.
 \bibitem{EKMM} A.D.Elmendorf, I. Kriz, M.A.Mandell and J.P. May
   {\em Rings, modules, and algebras in stable homotopy theory}.
   With an appendix by M. Cole. Mathematical Surveys and Monographs,
   {\bf 47}. American Mathematical Society, Providence, RI, 1997. xii+249 pp. 
 %\bibitem{BPRn}
%J.P.C.Greenlees and L.Meier 
%``Gorenstein duality for real spectra.'' 
%Algebraic \& Geometric Topology 17-6 (2017), 3547--3619. DOI 
%10.2140/agt.2017.17.3547,  arXiv: 1607.02332 
\bibitem{AdualGdual} J.P.C.Greenlees and V.Stojanoska 
{\em Anderson and Gorenstein duality}
(Geometric and topological aspects of group representations, Edited by 
J.Carlson, S.B.Iyengar and J.Pevtsova), Proceedings in Mathematics, 
(Springer-Verlag), 23pp,  arXiv: 1705.02664 
%\bibitem{GS} J.P.C.Greenlees and V.Stojanoska, {\em Anderson and Gorenstein duality}, Proceedings in Mathematics and Statistics, Springer-Verlag {\bf 242} (2018) 105-131. 
\bibitem{JW} D.C.Johnson and W.S.Wilson, {\em Projective dimension and Brown-Peterson homology}, Topology {\bf 12} (1973) 327--353. 
%\bibitem{Laz} A.Lazarev, {\em Homotopy theory of $A_\infty$ ring spectra and applications to $MU$-modules}, K-theory {\bf24} (2001) 243--281. 
\bibitem{MR} M.Mahowald and C.Rezk, {\em Brown-Comenetz duality and the Adams spectral sequence}, Amer Jour Math {\bf 121} (1999) 1153--1177. 
\bibitem{Rob} C.A.Robinson, {\em Spectra of derived module homomorphisms}, Math Proc Camb Phil Soc {\bf 101} (1987) 249--257. 
%\bibitem{Wei} C.A.Weibel, {\em An introduction to homological algebra}, Cambridge studies in advanced mathematics% {\bf 38} (1994). 
\bibitem{W} W.S.Wilson, {\em A new relation on the Stiefel-Whitney classes of Spin manifolds}, Ill Jour Math {\bf 17} (1973) 115--127. 
\end{thebibliography}
\end{document}